\def\BC{\mathbb C}
\def\BN{\mathbb N}
\def\BR{\mathbb R}
\def\cD{\mathcal D}
\def\rd{\mathrm d}
\def\Ga{\Gamma}
\def\Om{\Omega}
\def\al{\alpha}
\def\be{\beta}
\def\ga{\gamma}
\def\de{\delta}
\def\te{\theta}
\def\ka{\kappa}
\def\la{\lambda}
\def\vp{\varphi}
\def\om{\omega}
\def\f{\frac}
\def\ov{\overline}
\def\pa{\partial}
\def\wh{\widehat}
\def\wt{\widetilde}
\begin{document}
\mainmatter              % start of a contribution
\title{Uniqueness of inverse source problems for\\
time-fractional diffusion equations with\\
singular functions in time}
\titlerunning{Inverse problems for fractional equations with singular time functions}  % abbreviated title (for running head)
%                                     also used for the TOC unless
%                                     \toctitle is used
%
\author{Yikan Liu\inst{1} \and Masahiro Yamamoto\inst{2,3,4}}
\authorrunning{Yikan Liu et al.} % abbreviated author list (for running head)
%
%%%% list of authors for the TOC (use if author list has to be modified)
\tocauthor{Yikan Liu, and Masahiro Yamamoto}
\institute{Research Center of Mathematics for Social Creativity, Research Institute for Electronic Science, Hokkaido University, N12W7, Kita-Ward, Sapporo 060-0812, Japan,\\
\email{ykliu@es.hokudai.ac.jp}
\and
Graduate School of Mathematical Sciences, The University of Tokyo, 3-8-1 Komaba, Meguro-ku, Tokyo 153-8914, Japan,\\
\email{myama@next.odn.ne.jp}
\and
Honorary Member of Academy of Romanian Scientists, Ilfov, nr. 3, Bucuresti, Romania
\and
Correspondence member of Accademia Peloritana dei Pericolanti, Palazzo Universit\`a, Piazza S. Pugliatti 1 98122 Messina, Italy
}

\maketitle              % typeset the title of the contribution

\begin{abstract}
We consider a fractional diffusion equations of order $\al\in(0,1)$ whose source term is singular in time:
\[
(\pa_t^\al+A)u(\bm x,t)=\mu(t)f(\bm x),\quad(\bm x,t)\in\Om\times(0,T),
\]
where $\mu$ belongs to a Sobolev space of negative order. In inverse source problems of determining $f|_\Om$ by the data $u|_{\om\times(0,T)}$ with a given subdomain $\om\subset\Om$ or $\mu|_{(0,T)}$ by the data $u|_{\{\bm x_0\}\times(0,T)}$ with a given point $\bm x_0\in\Om$, we prove the uniqueness by reducing to the case  $\mu\in L^2(0,T)$. The key is a transformation of a solution to an initial-boundary value problem with a regular function in time.

\keywords{time-fractional diffusion equation, inverse source problem, uniqueness}
\end{abstract}

%%%%%%%%%%%%%%%%%%%%%%%%%%%%%%%%%%%%%%%%

\section{Introduction}\label{sec-intro}

Let $\Om\subset\BR^d$ ($d\in\BN:=\{1,2,\ldots\}$) be a bounded domain with a smooth boundary $\pa\Om$, and let $\bm\nu=\bm\nu(\bm x)$ be the unit outward normal vector of $\pa\Om$. We set 
\begin{equation}\label{eq1.1}
A v(\bm x)=-\sum_{i,j=1}^d\pa_j(a_{i j}(\bm x)\pa_i v(\bm x))+\sum_{j=1}^d b_j(\bm x)\pa_j v(\bm x)+c(\bm x)v(\bm x),\quad\bm x\in\Om,
\end{equation}
where $a_{i j}=a_{j i}\in C^1(\ov\Om)$, $b_j\in C(\ov\Om)$ ($1\le i,j\le d$), $c\in C(\ov\Om)$  and we assume that there exists a constant $\ka>0$ such that 
\[
\sum_{i,j=1}^d a_{i j}(\bm x)\xi_i\xi_j\ge\ka\sum_{j=1}^d|\xi_j|^2,\quad\forall\,\bm x\in\Om,\ \forall\,(\xi_1,\ldots,\xi_d)\in\BR^d.
\]
We consider an initial-boundary value problem for a time-fractional diffusion equation whose source term is described by $\mu(t)f(\bm x)$, where $f$ is a spatial distribution of the source and $\mu$ is a temporal change factor. We can describe a governing initial-boundary value problem as follows:
\begin{equation}\label{eq1.2}
\begin{cases}
(\rd_t^\al+A)u(\bm x,t)=\mu(t)f(\bm x), & (\bm x,t)\in\Om\times(0,T),\\
u(\bm x,0)=0, & \bm x\in\Om,\\
u(\bm x,t)=0, & (\bm x,t)\in\pa\Om\times(0,T).
\end{cases}
\end{equation}
Here, for $0<\al<1$, we can formally define the pointwise Caputo derivative as
\[
\rd_t^\al v(t):=\f1{\Ga(1-\al)}\int^t_0 (t-s)^{-\al}v'(s)\,\rd s,\quad v\in W^{1,1}(0,T).
\]

Owing to their capability of describing memory effects, time-fractional partial differential equations such as \eqref{eq1.2} have gathered consistent popularity among multidisciplinary researchers as models for anomalous diffusion and viscoelasticity (e.g. \cite{BP88,BDES18,HH98}). Though the history of fractional calculus can be traced back to Leibniz, the main focus on fractional equations was biased to the construction of explicit and approximate solutions via special functions and transforms until the last decades due to the needs from applied science. It has only been started recently that problems like \eqref{eq1.2} are formulated in appropriate function spaces using modern mathematical tools, followed by rapidly increasing literature on their fundamental theories, numerical analysis and inverse problems. Here we do not intend to give a complete bibliography, but only refer to several milestone works \cite{EK04,GLY,KRY,SY} and the references therein.

Especially, the source term in \eqref{eq1.2} takes the form of separated variables, where $\mu(t)$ and $f(\bm x)$ describe the time evolution and the spatial distribution of some contaminant source, respectively. Therefore, the determination of $\mu(t)$ or $f(\bm x)$ turns out to be important in the context of environmental issues, which motivates us to propose the following problem.

\begin{problem}[inverse source problems]\label{isp}
Let $u$ satisfy \eqref{eq1.2}, $\emptyset\ne\om\subset\Om$ be an arbitrary subdomain and $\bm x_0\in\Om$ be an arbitrary point. Determine $\mu|_{(0,T)}$ by $u|_{\{\bm x_0\}\times(0,T)}$ with given $f$ and $f|_\Om$ by $u|_{\om\times (0,T)}$ with given $\mu,$ respectively.
\end{problem}

Indeed, Problem~\ref{isp} includes two inverse problems, namely, the determination of $\mu(t)$ by the single point observation and that of $f(\bm x)$ by the partial interior observation of $u$. Both problems have been studied intensively in the last decade, especially among which the uniqueness was already known in literature. We can refer to many works, but here only to \cite{LRY,L17,LZ17} for the inverse $t$-source problem and \cite{JLLY,KSXY,KLY,LLY22} for the inverse $\bm x$-source problem. For a comprehensive survey on Problem~\ref{isp} especially before 2019, we refer to Liu, Li and Yamamoto \cite{LLY}.

However, it reveals that the existing papers mainly discuss regular temporal components $\mu$, e.g. in $C^1[0,T]$ or $L^2(0,T)$. Such restrictions exclude a wide class of singular functions represented by the Dirac delta function, which corresponds with point sources in practice. This encourages us to reconsider Problem~\ref{isp} in function spaces with lower regularity. More precisely, in this article we are mainly concerned with the case of $\mu=\mu(t)$ in a Sobolev space of negative order, in particular $\mu\not\in L^2(0,T)$.

For such less regular $\mu$, we cannot expect the differentiability of $u(\bm x,\,\cdot\,)$ in time and we must redefine the Caputo derivative $\rd_t^\al$ in \eqref{eq1.2}. In this case, the unique existence of a solution to the initial-boundary value problem is more delicate, and an adequate formulation is needed.

This article is composed of 6 sections and 1 appendix. Redefining $\rd_t^\al$ and thus problem \eqref{eq1.2} in negative Sobolev spaces, in Sect.~\ref{sec-premain} we state the main result in this paper. Then Sect.~\ref{sec-trans}--\ref{sec-Duhamel} are devoted to the two key ingredients for treating the singular system, i.e., the transfer to another regular system in $L^2$ and Duhamel's principle in $_\al H(0,T)$-space. Next, the proof of Theorem~\ref{thm1} is completed in Sect.~\ref{sec-proof}. Finally, Sect.~\ref{sec-remark} provides concluding remarks, and the proof of a technical detail is postponed to Appendix~\ref{sec-app}.

%%%%%%%%%%%%%%%%%%%%%%%%%%%%%%%%%%%%%%%%

\section{Preliminary and Statement of the Main Result}\label{sec-premain}

To start with, we first define a fractional derivative for $v\in L^2(0,T)$ which extends the domain of  $\rd_t^\al$ and formulate the initial-boundary value problem. To this end, we introduce function spaces and operators. Set the forward and backward Riemann-Liouville integral operators as
\begin{align*}
(J_\al v)(t) & :=\f1{\Ga(\al)}\int^t_0(t-s)^{\al-1}v(s)\,\rd s,\quad0<t<T,\quad\cD(J_\al)=L^2(0,T),\\
(J^\al v)(t) & :=\f1{\Ga(\al)}\int^T_t(s-t)^{\al-1}v(s)\,\rd s,\quad0<t<T,\quad\cD(J_\al)=L^2(0,T).
\end{align*}
We define an operator $\tau:L^2(0,T)\longrightarrow L^2(0,T)$ by $(\tau v)(t):=v(T-t)$ and obviously $\tau$ is an isomorphism. We set
\begin{align*}
{}_0C^1[0,T] & :=\{v\in C^1[0,T]\mid v(0)=0\},\\
{}^0C^1[0,T] & :=\{v\in C^1[0,T]\mid v(T)=0\}=\tau({}_0C^1[0,T]).
\end{align*}
By $H^\al(0,T)$ we denote the Sobolev-Slobodecki space with the norm $\|\cdot\|_{H^\al(0,T)}$ defined by
\[
\|v\|_{H^\al(0,T)}:=\left(\| v\|^2_{L^2(0,T)}+\int^T_0\!\!\!\int^T_0 \f{|v(t)-v(s)|^2}{|t-s|^{1+2\al}}\,\rd t\rd s\right)^{1/2}
\]
(e.g., Adams \cite{Ad}). Then we further introduce the function spaces
\[
{}_\al H(0,T):=\ov{{}_0C^1[0,T]}^{H^\al(0,T)},\quad {}^\al H(0,T):=\ov{^{0}{C^1}[0,T]}^{H^\al(0,T)}.
\]
Regarding the operators $J_\al,J^\al$ and the spaces ${}_\al H(0,T),{}^\al H(0,T)$, we have the following lemma.

\begin{lemma}\label{lem1.1}
Let $0<\al<1$.

{\rm(i)} $J_\al:L^2(0,T)\longrightarrow{}_\al H(0,T)$ is bijective and isomorphism.

{\rm(ii)} $J^\al:L^2(0,T)\longrightarrow{}^\al H(0,T)$ is bijective and isomorphism.
\end{lemma}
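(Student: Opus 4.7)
The plan is to verify four things about $J_\al$, from which (i) follows by the bounded inverse theorem: continuity $L^2(0,T)\to H^\al(0,T)$, inclusion of the image in ${}_\al H(0,T)$, injectivity, and surjectivity onto ${}_\al H(0,T)$. Claim (ii) is then an immediate consequence of (i): since $J^\al=\tau J_\al\tau$, $\tau$ is an isometry of $L^2(0,T)$ and of $H^\al(0,T)$, and by construction ${}^\al H(0,T)=\tau({}_\al H(0,T))$, the statement for $J^\al$ transfers directly from that for $J_\al$.

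For continuity $J_\al:L^2\to H^\al$, the $L^2$-norm bound is immediate from Young's convolution inequality applied to the integrable kernel $s^{\al-1}/\Ga(\al)$ on $(0,T)$. The Slobodeckii seminorm is most cleanly handled after extending $v$ by zero to $\BR$: the forward Riemann-Liouville integral becomes a convolution whose Fourier symbol is, up to a constant, $(\ri\xi)^{-\al}$, and the composite multiplier $(\ri\xi)^{-\al}(1+|\xi|^2)^{\al/2}$ is bounded, giving $\|J_\al v\|_{H^\al(\BR)}\le C\|v\|_{L^2(0,T)}$ and hence the required estimate on $(0,T)$. To place the image inside ${}_\al H(0,T)$ it suffices, by continuity and density of $C_c^\infty(0,T)$ in $L^2(0,T)$, to check that $J_\al(C_c^\infty(0,T))\subset {}_0C^1[0,T]$; the substitution $u=t-s$ in the defining integral yields $(J_\al v)(0)=0$ and $(J_\al v)'(t)=J_\al v'(t)\in C[0,T]$, as needed.

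Injectivity is immediate from Titchmarsh's convolution theorem, or equivalently from the uniqueness theory of the Abel integral equation. Surjectivity begins with the observation that ${}_0C^1[0,T]\subset J_\al(L^2(0,T))$: for any such $w$, the candidate $v:=J_{1-\al}w'\in L^2(0,T)$ satisfies, by the semigroup identity $J_\al J_{1-\al}=J_1$ and the fact that $w(0)=0$,
\[
J_\al v=J_\al J_{1-\al}w'=J_1 w'=w.
\]
Since ${}_0C^1[0,T]$ is dense in ${}_\al H(0,T)$, surjectivity onto ${}_\al H(0,T)$ is equivalent to closedness of the range $J_\al(L^2(0,T))$ in ${}_\al H(0,T)$, which in turn follows from the reverse estimate $\|v\|_{L^2(0,T)}\le C\|J_\al v\|_{H^\al(0,T)}$.

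The reverse estimate is the main technical obstacle, since one must control $v=J_{1-\al}w'$ in $L^2$ by the full $H^\al$-norm of $w=J_\al v$ rather than by $\|w'\|_{L^2}$. A natural route is again via zero extension and Fourier analysis: because $w\in {}_0C^1[0,T]$ vanishes at $0$, it admits a suitable extension to $\BR$ in $H^\al$ with norm comparable to $\|w\|_{H^\al(0,T)}$, and on the Fourier side $J_{1-\al}w'$ corresponds (after restriction back to $(0,T)$) to multiplication by $(\ri\xi)^\al$; the composite symbol $(\ri\xi)^\al(1+|\xi|^2)^{-\al/2}$ is bounded and yields $\|v\|_{L^2(0,T)}\le C\|w\|_{H^\al(0,T)}$. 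Combining this with the forward estimate produces the two-sided bound on ${}_0C^1[0,T]$, which extends by density, and the bounded inverse theorem completes the proof of (i); (ii) then follows from the reversal identity noted above.
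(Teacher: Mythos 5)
Your part (ii) is exactly the paper's own derivation: the identity $J^\al=\tau J_\al\tau$, the fact that $\tau$ is an isometry, and ${}^\al H(0,T)=\tau({}_\al H(0,T))$ reduce (ii) to (i). For part (i) the paper gives no proof but cites Gorenflo--Luchko--Yamamoto and Kubica--Ryszewska--Yamamoto, where the result is obtained by interpolating between $J_1:L^2(0,T)\to{}_0H^1(0,T)$ and the identity; a self-contained multiplier proof would be a genuinely different route, and several of your steps are sound (Young's inequality for the $L^2$ bound, $J_\al(C_c^\infty(0,T))\subset{}_0C^1[0,T]$, injectivity via Titchmarsh or simply via $J_{1-\al}J_\al=J_1$, and $J_\al(J_{1-\al}w')=w$ for $w\in{}_0C^1[0,T]$). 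But the two analytic pillars have real gaps.

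First, the forward estimate: the composite multiplier $(\ri\xi)^{-\al}(1+|\xi|^2)^{\al/2}$ is \emph{not} bounded --- it blows up like $|\xi|^{-\al}$ as $\xi\to0$. This is not cosmetic: for $\al\ge1/2$ the whole-line fractional integral of the zero extension of $v$ is not even in $L^2(\BR)$, since it decays only like $t^{\al-1}$ at infinity. The step can be repaired by replacing the kernel $t_+^{\al-1}/\Ga(\al)$ with a compactly supported truncation $k_\al\chi$, $\chi\equiv1$ on $[0,T]$, which leaves $J_\al v$ unchanged on $(0,T)$ and whose Fourier transform is bounded and decays like $|\xi|^{-\al}$; but as written the claim is false. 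Second, and more seriously, the surjectivity argument via ``dense range $+$ reverse estimate $\Rightarrow$ closed range'' cannot work at $\al=1/2$. There ${}_{1/2}H(0,T)$, being the closure of ${}_0C^1[0,T]$ in the plain $H^{1/2}(0,T)$-norm, is all of $H^{1/2}(0,T)$ (point evaluation is unbounded at this exponent), whereas $J_{1/2}(L^2(0,T))$ is the strictly smaller Lions--Magenes-type space characterized by $\int_0^T|v(t)|^2t^{-1}\,\rd t<\infty$; for instance $v\equiv1$ lies in ${}_{1/2}H(0,T)$ but not in the range, so the two-sided bound $\|v\|_{L^2(0,T)}\le C\|J_{1/2}v\|_{H^{1/2}(0,T)}$ you need is false. (This is precisely why the cited references equip the image space with a modified norm at $\al=1/2$.) For $\al>1/2$ your ``suitable extension'' by zero across $t=0$ also conceals the key point: its boundedness in $H^\al$ rests on the fractional Hardy inequality $\int_0^T|w(t)|^2t^{-2\al}\,\rd t\le C\|w\|^2_{H^\al(0,T)}$, which holds only because $w(0)=0$ and $\al\ne1/2$. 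So the architecture is reasonable for $\al\ne1/2$ once these points are supplied, but the proof as written does not close, and no variant of it can cover $\al=1/2$ with the stated norm.
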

	
As for the proof of Lemma~\ref{lem1.1}(i), see Gorenflo, Luchko and Yamamoto \cite{GLY}, Kubica, Ryszewska and Yamamoto \cite{KRY}. Lemma~\ref{lem1.1}(ii) can be readily derived from Lemma~\ref{lem1.1}(i), the identity $J^\al v=\tau(J_\al(\tau v))$ and the fact that $\tau$ is an isomorphism.

Based on the spaces ${}_\al H(0,T)$ and ${}^\al H(0,T)$, let
\begin{align*}
& {}_\al H(0,T)\subset L^2(0,T)\subset({}_\al H(0,T))'=:{}_{-\al}H(0,T),\\
& {}^\al H(0,T)\subset L^2(0,T)\subset({}^\al H(0,T))'=:{}^{-\al}H(0,T)
\end{align*}
be the Gel'fand triples, where $(\,\cdot\,)'$ denotes the dual space. Let $X,Y$ be Hilbert spaces and $K:X\longrightarrow Y$ be a bounded linear operator with its domain $\cD(K)=X$. Then we recall that $K'$ is the maximal operator among $\wh K:Y'\longrightarrow X'$ with $\cD(\wh K)\subset Y'$ such that
\[ 
{}_{X'}\langle \wh K y,x\rangle_X={}_{Y'}\langle y,K x\rangle_Y,\quad\forall\,x\in X,\ \forall\,y\in\cD(\wh K)\subset Y'.
\]
Then we can prove the following lemma.

\begin{lemma}[{Yamamoto \cite[Proposition 9]{Y22}}]\label{lem1.2}
Let $0<\be<1$.  

{\rm(i)} $(J_\be)':{}_{-\be}H(0,T)\longrightarrow L^2(0,T)$ is bijective and isomorphism.

{\rm(ii)} $(J^\be)':{}^{-\be}H(0,T)\longrightarrow L^2(0,T)$ is bijective and isomorphism.

{\rm(iii)} There holds $J_\be\subset(J^\be)'$. In particular, $J_\be v=(J^\be)'v$ for $v\in L^2(0,T)$.
\end{lemma}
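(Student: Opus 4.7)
The plan is to deduce parts (i) and (ii) abstractly from Lemma~\ref{lem1.1} via a standard Banach-adjoint argument, and to verify part (iii) by a direct Fubini computation.

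For (i), I would start from Lemma~\ref{lem1.1}(i), which asserts that $J_\be\colon L^2(0,T)\longrightarrow{}_\be H(0,T)$ is a topological isomorphism. Appealing to the general functional-analytic fact that if $K\colon X\to Y$ is a bijective bounded linear operator between Banach spaces with bounded inverse, then its Banach adjoint $K'\colon Y'\to X'$ is again bijective and bounded with $(K')^{-1}=(K^{-1})'$, and making the canonical identifications $(L^2(0,T))'=L^2(0,T)$ (through the $L^2$ inner product) together with $({}_\be H(0,T))'={}_{-\be}H(0,T)$ (by the Gelfand-triple definition), one immediately obtains that $(J_\be)'\colon{}_{-\be}H(0,T)\longrightarrow L^2(0,T)$ is an isomorphism. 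Part (ii) is literally identical after replacing $J_\be$, ${}_\be H$, ${}_{-\be}H$ by $J^\be$, ${}^\be H$, ${}^{-\be}H$ and invoking Lemma~\ref{lem1.1}(ii).

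For (iii), I would compute, for $v,w\in L^2(0,T)$, by Fubini's theorem on the triangle $\{0\le t\le s\le T\}$:
\[
\int^T_0 v(t)\,(J^\be w)(t)\,\rd t=\f1{\Ga(\be)}\int^T_0\!\!\int^T_t(s-t)^{\be-1}v(t)w(s)\,\rd s\,\rd t=\int^T_0(J_\be v)(s)\,w(s)\,\rd s.
\]
Under the Gelfand-triple embedding $L^2(0,T)\subset{}^{-\be}H(0,T)$, the leftmost expression equals the duality pairing ${}_{{}^{-\be}H}\langle v,J^\be w\rangle_{{}^\be H}$ whenever $v\in L^2(0,T)$, while the rightmost expression is the $L^2$ inner product of $J_\be v\in L^2(0,T)$ with $w$. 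By the very definition of $(J^\be)'$ as the maximal operator satisfying the adjoint identity $\langle\wh K v,w\rangle_{L^2}={}_{{}^{-\be}H}\langle v,J^\be w\rangle_{{}^\be H}$, this forces $L^2(0,T)\subset\cD((J^\be)')$ and $(J^\be)'v=J_\be v$ for every $v\in L^2(0,T)$, which is precisely $J_\be\subset(J^\be)'$.

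The only genuinely non-routine point is the bookkeeping around the Gelfand triples: one must check that the $L^2$-inner-product pairing on $L^2\times L^2$ is the restriction of the chosen duality pairing between ${}^{-\be}H(0,T)$ and ${}^\be H(0,T)$, so that the Fubini identity really does realise the adjoint identity. Once this compatibility has been pinned down, everything else in the lemma is a direct consequence of Lemma~\ref{lem1.1} and elementary Hilbert-space duality.
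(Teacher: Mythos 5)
Your proposal is correct, and it is essentially the standard argument: the paper itself gives no proof of this lemma but simply cites Yamamoto \cite[Proposition 9]{Y22}, so there is nothing in the text to compare against beyond the definitions of $(\,\cdot\,)'$ and the Gel'fand triples, with which your reasoning is fully consistent. Parts (i) and (ii) are exactly the Banach-adjoint consequence of Lemma~\ref{lem1.1}, and your Fubini computation for (iii) is the right verification (note that Tonelli applies because $\int^T_0(J_\be|v|)(s)|w(s)|\,\rd s\le\|J_\be|v|\|_{L^2(0,T)}\|w\|_{L^2(0,T)}<\infty$, $J_\be$ being bounded on $L^2(0,T)$ since $t^{\be-1}\in L^1(0,T)$). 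The one point you rightly flag --- that the $L^2$ inner product is the restriction of the duality pairing between ${}^{-\be}H(0,T)$ and ${}^\be H(0,T)$ --- is precisely what the Gel'fand-triple construction guarantees, so the argument closes.
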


Now we can redefine the Caputo derivative for functions in $L^2(0,T)$.

\begin{definition}
We define 
\[
\pa_t^\al :=((J^\al)')^{-1},\quad\cD(\pa_t^\al)=L^2(0,T).
\]
\end{definition}

\begin{remark}
Kubica, Ryszewska and Yamamoto \cite{KRY} defined $\wt\pa_t^\al$ by
\[
\wt\pa_t^\al:=(J_\al)^{-1},\quad\cD(\wt\pa_t^\al)={}_\al H(0,T).
\]
By Lemma~\ref{lem1.2}(iii), we see $\pa_t^\al\supset\wt\pa_t^\al$ and
\begin{equation}\label{eq1.3}
\wt\pa_t^\al v=\pa_t^\al v=\lim_{n\to\infty}\rd_t^\al v_n\quad\mbox{in }L^2(0,T),\ \forall\,v\in{}_\al H(0,T),
\end{equation}
where $v_n\in{}_0C^1[0,T]$ and $v_n\longrightarrow v$ in $H^\al(0,T)$. Thus $\pa_t^\al$ is an extension of $\wt\pa_t^\al$.
\end{remark}

Now we are ready to propose the initial-boundary value problem:
\begin{align}
& (\pa_t^\al+A)u=\mu(t)f(\bm x)\quad\mbox{in }^{-\be}H(0,T;L^2(\Om)),\label{eq1.4}\\
& u\in L^2(0,T;L^2(\Om))\cap{}^{-\be}{H(0,T;H^2(\Om)\cap H^1_0(\Om))},\label{eq1.5}
\end{align}
where
\begin{equation}\label{eq1.6}
\mu\in{}^{-\be}H(0,T),\quad f\in L^2(\Om),\quad\al\le\be<1,\quad\be>1/2.
\end{equation}
Henceforth we assume the existence of such a solution $u$, although it can be verified during the succeeding arguments.

Now we are well prepared to state the main result on the uniqueness for Problem~\ref{isp} with singularity in time.

\begin{theorem}\label{thm1}
Let $u$ satisfy {\rm\eqref{eq1.4}--\eqref{eq1.5}} and assume \eqref{eq1.6}. Let $\om\subset\Om$ and $\bm x_0\in\Om$ be the same as that in Problem~$\ref{isp}$.

{\rm(i)} Let $\mu\not\equiv0$ in $(0,T)$. If $u=0$ in $\om\times (0,T),$ then $f=0$ in $L^2(\Om)$.

{\rm(ii)} In \eqref{eq1.1} we assume that $b_j=0\ (j=1,\ldots,d)$ and $c\ge0$ in 
$\Om$. Let $f$ satisfy
\begin{gather}
f\not\equiv0\mbox{ in }\Om\quad\mbox{and}\quad(f\ge0\mbox{ or }f\le0\mbox{ in }\Om),\label{eq1.8}\\
f\in\begin{cases}
L^2(\Om) & \mbox{if }d=1,2,3,\\
\cD(A^\te) & \mbox{if }d\ge4,\ \mbox{where }\te>d/4-1.
\end{cases}\label{eq1.9}
\end{gather}
Then $u=0$ at $\{\bm x_0\}\times(0,T)$ implies $\mu=0$ in $^{-\be}H(0,T)$.
\end{theorem}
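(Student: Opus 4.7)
The plan is to reduce Theorem~\ref{thm1} to the already-known uniqueness results for regular temporal factors $\nu\in L^2(0,T)$, using the two ingredients advertised in Sect.~\ref{sec-trans}--\ref{sec-Duhamel}: the transfer to a regular system and Duhamel's principle in ${}_\al H(0,T)$. First I would use Lemma~\ref{lem1.2}(ii) to write
\[
\mu=\pa_t^\be\nu,\qquad\nu:=(J^\be)'\mu\in L^2(0,T),
\]
and introduce the auxiliary problem
\[
(\pa_t^\al+A)v=\nu(t)f(\bm x)\mbox{ in }L^2(0,T;L^2(\Om)),\quad v|_{t=0}=0,\quad v|_{\pa\Om}=0,
\]
which is uniquely solvable in $L^2(0,T;H^2(\Om)\cap H^1_0(\Om))\cap {}_\al H(0,T;L^2(\Om))$ by the classical $L^2$-theory (cf.\ \cite{KRY,SY}).

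Next I would establish the Duhamel-type identity $u=\pa_t^\be v$ in ${}^{-\be}H(0,T;L^2(\Om))$. At the heuristic level this is just the convolution representation $u(\bm x,t)=\int_0^t\mu(s)S(t-s)f(\bm x)\,\rd s$ after substituting $\mu=\pa_t^\be\nu$ and ``integrating by parts'' against the regular kernel $s\mapsto S(t-s)f$. To make it rigorous in the framework \eqref{eq1.4}--\eqref{eq1.5}, I would test against functions in ${}^\be H(0,T)$ through the Gel'fand triple ${}_\be H\subset L^2\subset{}^{-\be}H$ and use Lemmas~\ref{lem1.1}--\ref{lem1.2} to commute $\pa_t^\al$ with $\pa_t^\be$ on the relevant subspaces; by uniqueness for \eqref{eq1.4}--\eqref{eq1.5}, this forces $\pa_t^\be v$ to coincide with $u$.

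With this relation in hand, the vanishing observations transfer cleanly because $\pa_t^\be$ acts only in time. For part~(i), $u\equiv 0$ in $\om\times(0,T)$ gives $\pa_t^\be v(\bm x,\cdot)=0$ for a.e.\ $\bm x\in\om$; since $\pa_t^\be=((J^\be)')^{-1}$ has trivial kernel on $L^2(0,T)$, we get $v\equiv 0$ in $\om\times(0,T)$. The hypothesis $\mu\not\equiv 0$ forces $\nu\not\equiv 0$, so the known inverse $\bm x$-source uniqueness in the regular case (cf.\ \cite{JLLY,KLY,LLY22}) yields $f\equiv 0$. For part~(ii), the same argument evaluated at $\bm x_0$ produces $v(\bm x_0,\cdot)=0$, where pointwise evaluation in $\bm x$ is legitimate thanks to \eqref{eq1.9} and the Sobolev embedding $\cD(A^\te)\hookrightarrow C(\ov\Om)$. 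The sign and non-triviality condition \eqref{eq1.8} then permits invocation of the existing single-point inverse $t$-source uniqueness with $L^2$ source factor (cf.\ \cite{LRY,L17,LZ17}) on $v$, yielding $\nu\equiv 0$; hence $\mu=\pa_t^\be\nu=0$ in ${}^{-\be}H(0,T)$.

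The hardest step is the rigorous justification of the identity $u=\pa_t^\be v$ in the negative-order setting, i.e.\ the interplay between the generalized Caputo derivative $\pa_t^\al$ acting on the distributional-in-time solution $u$ and the extra temporal derivative $\pa_t^\be$ coming from the transfer. Everything else is either a direct appeal to the regularity theory of \eqref{eq1.4}--\eqref{eq1.5} or a black-box invocation of the classical $L^2$-uniqueness results; only the Duhamel/transfer identification must be genuinely crafted inside the ${}_\al H$-scale.
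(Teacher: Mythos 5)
Your reduction is exactly the paper's key idea: both arguments hinge on applying the transposed Riemann--Liouville integral $(J^\be)'$ in time so that the singular factor $\mu\in{}^{-\be}H(0,T)$ is replaced by $g:=(J^\be)'\mu\in L^2(0,T)$, and the vanishing observations transfer because this operator acts only in $t$ and is injective. The differences lie in the execution. The paper runs the transfer in the opposite direction: it sets $v:=(J^\be)'u$ for the \emph{given} $u$ and proves in Proposition~\ref{prop2.1} that $v$ solves the regular system, the only delicate point being the commutation $(J^\be)'((J^\al)')^{-1}u=((J^\al)')^{-1}(J^\be)'u$, derived from the semigroup law $J^{\be}=J^{\be-\al}J^{\al}$ and Kato's rule for transposed products. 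You instead solve the auxiliary regular problem for $v$ and identify $u=\pa_t^\be v$ ``by uniqueness for \eqref{eq1.4}--\eqref{eq1.5}''; but uniqueness for the singular problem is not available at that stage (the paper explicitly only \emph{assumes existence} of $u$ and notes that well-posedness is clarified a posteriori via $v$), so as written this step is circular. It is repaired simply by adopting the paper's direction, which needs no uniqueness for \eqref{eq1.4}--\eqref{eq1.5}.

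The second divergence is that you black-box the regular-case uniqueness, whereas the paper proves it as Lemma~\ref{lem4.1} by combining Duhamel's principle in ${}_\al H(0,T)$ (Lemma~\ref{lem3.1}), the Titchmarsh convolution theorem, the Mittag-Leffler asymptotics \eqref{eq4.11} and the elliptic strong maximum principle. For part (i) the appeal to \cite{JLLY} is essentially what the paper itself does. For part (ii), however, the cited works \cite{LRY,L17,LZ17} assume continuous or $C^1$ temporal factors and a classical solution notion; they do not directly cover $g$ merely in $L^2(0,T)$ within the present ${}_\al H$-framework under exactly hypotheses \eqref{eq1.8}--\eqref{eq1.9}, and supplying that case is precisely the content of Sects.~\ref{sec-Duhamel}--\ref{sec-proof}, so this step cannot be wholly outsourced without checking the hypotheses. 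Finally, the pointwise trace $v(\bm x_0,\cdot)$ is justified by $v\in L^2(0,T;C(\ov\Om))$, i.e.\ by \eqref{eq1.10} proved in Appendix~\ref{sec-app}; the embedding you invoke should read $\cD(A^{1+\te})\subset H^{2+2\te}(\Om)\subset C(\ov\Om)$ applied to the solution, not $\cD(A^{\te})\hookrightarrow C(\ov\Om)$ applied to $f$, which is false for $\te$ only slightly above $d/4-1$.
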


In Theorem~\ref{thm1}(ii), in terms of \eqref{eq1.9} we can verify
\begin{equation}\label{eq1.10}
u\in{}^{-\be}H(0,T;C(\ov\Om)).
\end{equation}
Therefore, the data $u(\bm x_0,\,\cdot\,)\in{}^{-\be}H(0,T)$ makes sense. The proof of \eqref{eq1.10} is given in Appendix~\ref{sec-app}.

In Theorem~\ref{thm1}(ii), we can further study and prove the uniqueness in the case where we replace assumption \eqref{eq1.8} by more general conditions, e.g., $b_j\ne0$ and $c$ is not necessarily non-negative, but we omit details. Moreover, here we omit the treatments for the case $1<\al<2$.

Since $^{\be}H(0,T)\subset C[0,T]$ by $\be>1/2$, we see that the following $\mu$ are in $^{-\be}H(0,T)$: $\mu\in L^1(0,T)$ and $\mu(t)=\sum_{k=1}^N r_k\de_{a_k}(t)$, where $r_k\in\BR$, $0<a_k<T$ and $\de_{a_k}$ is the Dirac delta function: $\langle\de_{a_k},\vp\rangle=\vp(a_k)$ for $\vp\in C[0,T]$. In particular, from Theorem~\ref{thm1}(ii) we can directly derive

\begin{corollary}
Under the same conditions as that in Theorem {\rm\ref{thm1}(ii),} we set 
\[
\mu^\ell(t)=\sum_{k=1}^{N^\ell}r_k^\ell\de_{a_k^\ell}(t),\quad\ell=1,2,\quad a_k^\ell\in(0,T),\quad r_k^\ell\in\BR\setminus\{0\},\quad k=1,\ldots,N^\ell,
\]
where $a_k^\ell\ (k=1,2,\ldots,N^\ell)$ are mutually distinct for $\ell=1,2$. Let $u^\ell$ be the solution to {\rm\eqref{eq1.4}--\eqref{eq1.5}} with $\mu=\mu^\ell\ (\ell=1,2)$. Then $u^1(\bm x_0,t)=u^2(\bm x_0,t)$ for $0<t<T$ implies $N^1=N^2,$ $r_k^1=r_k^2$ and $a_k^1=a_k^2\ (k=1,\ldots,N^1)$.
\end{corollary}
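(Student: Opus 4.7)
The plan is to reduce to Theorem~\ref{thm1}(ii) by linearity, then exploit the fact that Dirac masses at distinct interior points are linearly independent as functionals on ${}^\be H(0,T)$. I set $u:=u^1-u^2$ and $\mu:=\mu^1-\mu^2$. Since $\be>1/2$ gives the continuous embedding ${}^\be H(0,T)\hookrightarrow C[0,T]$, each $\de_{a_k^\ell}$ lies in ${}^{-\be}H(0,T)$, and hence so does $\mu$. By linearity of \eqref{eq1.4}--\eqref{eq1.5} in the temporal factor, $u$ solves the system with factor $\mu$ and the same spatial profile $f$. The hypothesis $u^1(\bm x_0,t)=u^2(\bm x_0,t)$ on $(0,T)$ reads $u(\bm x_0,\,\cdot\,)=0$, and since $f$ satisfies \eqref{eq1.8}--\eqref{eq1.9}, Theorem~\ref{thm1}(ii) yields $\mu=0$ in ${}^{-\be}H(0,T)$.

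Next, let $\{b_1,\ldots,b_M\}$ enumerate the distinct elements of $\{a_k^1\}_{k=1}^{N^1}\cup\{a_k^2\}_{k=1}^{N^2}$, and write
\[
\mu=\sum_{j=1}^M s_j\de_{b_j},
\]
where $s_j\in\BR$ is the signed combination of coefficients attached to $b_j$ (so $s_j=\pm r_k^\ell$ when $b_j$ appears in only one of the two sums, and $s_j=r_k^1-r_{k'}^2$ when $b_j$ is common to both). Fix $j_0$: because the $b_j$ are finitely many interior points of $(0,T)$ and $T\notin\{b_j\}$, I can pick $\vp\in{}^0C^1[0,T]\subset{}^\be H(0,T)$ as a smooth bump with $\vp(b_{j_0})=1$ and $\vp(b_j)=0$ for $j\ne j_0$. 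Testing $\mu=0$ against $\vp$ (point evaluation on ${}^\be H$-functions being legitimate via the continuous embedding into $C[0,T]$) gives $0=\langle\mu,\vp\rangle=\sum_{j=1}^M s_j\vp(b_j)=s_{j_0}$, so every $s_j=0$.

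Finally, since each $r_k^\ell\ne0$ and the $a_k^\ell$ are mutually distinct within each $\ell\in\{1,2\}$, the vanishing of every $s_j$ forces $\{a_k^1\}=\{a_k^2\}$ as sets, with matching coefficients at each common point. Hence $N^1=N^2$ and, after relabelling, $a_k^1=a_k^2$ and $r_k^1=r_k^2$ for $k=1,\ldots,N^1$. The only substantive step is the invocation of Theorem~\ref{thm1}(ii); once $\mu=0$ in ${}^{-\be}H(0,T)$ is established, the remaining identification of point masses is purely functional-analytic and needs no further analysis of the fractional equation.
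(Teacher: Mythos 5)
Your proposal is correct and follows exactly the route the paper intends: the paper gives no written proof, only the remark that the corollary is ``directly derived'' from Theorem~\ref{thm1}(ii), and your argument (linearity to reduce to $\mu^1-\mu^2=0$ in ${}^{-\be}H(0,T)$, then linear independence of Dirac masses at distinct interior points tested against bump functions in ${}^0C^1[0,T]\subset{}^\be H(0,T)$) is precisely the intended filling-in of that remark.
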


For the case of $\mu\in L^2(0,T)$ or $\mu\in L^1(0,T)$, there are rich references but we are here restricted to \cite{JLLY,KLY,KSXY,LRY}. In particular, \cite{KLY} considers also the case $0<\al\le2$ and variable $\al(\bm x)$ for $\mu\in L^1(0,T)$, and we can apply our method to their formulated inverse problems, as mentioned in Sect.~\ref{sec-remark}.

%%%%%%%%%%%%%%%%%%%%%%%%%%%%%%%%%%%%%%%%

\section{Transfer to a Regular System}\label{sec-trans}

The main purpose of this section is the proof of

\begin{proposition}\label{prop2.1}
Let $u$ satisfy {\rm\eqref{eq1.4}--\eqref{eq1.5}} with \eqref{eq1.6}. Then 
\begin{equation}\label{eq2.1}
v:=(J^\be)'u\in{}_{\be}H(0,T;L^2(\Om))\cap L^2(0,T;H^2(\Om)\cap H^1_0(\Om))
\end{equation}
satisfies
\begin{equation}\label{eq2.2}
(\pa_t^\al+A)v=(J^\be)'\mu\,f\quad\mbox{in }L^2(0,T;L^2(\Om)).
\end{equation}
\end{proposition}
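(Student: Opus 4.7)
The strategy is to apply the purely-temporal operator $(J^\be)'$ to both sides of the distributional equation \eqref{eq1.4} and to interpret the result via Lemma~\ref{lem1.2}, which turns the ${}^{-\be}H$-valued equation into an $L^2$-valued one. For the regularity statement \eqref{eq2.1}, the spatial part is immediate: since $u\in{}^{-\be}H(0,T;H^2(\Om)\cap H^1_0(\Om))$ and $(J^\be)'$ is an isomorphism from ${}^{-\be}H(0,T)$ onto $L^2(0,T)$ by Lemma~\ref{lem1.2}(ii), the vector-valued version yields $v=(J^\be)' u\in L^2(0,T;H^2(\Om)\cap H^1_0(\Om))$. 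For the temporal part, I use that $u\in L^2(0,T;L^2(\Om))$, so Lemma~\ref{lem1.2}(iii) identifies $v=(J^\be)'u$ with $J_\be u$ pointwise in $\bm x$, and Lemma~\ref{lem1.1}(i) then places this in ${}_\be H(0,T;L^2(\Om))$.

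Applying $(J^\be)'$ to \eqref{eq1.4} is straightforward for the elliptic and source terms: since $A$ acts only in $\bm x$, it commutes trivially with $(J^\be)'$, giving $(J^\be)'(Au)=Av$; and the source factorises as $((J^\be)'\mu)\,f$, which lies in $L^2(0,T;L^2(\Om))$ because $(J^\be)'\mu\in L^2(0,T)$ by Lemma~\ref{lem1.2}(ii). Thus \eqref{eq2.2} reduces to the identity
\[
(J^\be)'\pa_t^\al u=\pa_t^\al v\quad\mbox{in }L^2(0,T;L^2(\Om)),
\]
which I would establish by computing both sides and showing each equals $J_{\be-\al}u$. For the right-hand side: $v=J_\be u=J_\al J_{\be-\al}u\in{}_\be H\subset{}_\al H$ (the inclusion following from $\be\ge\al$ together with the definition of ${}_\al H$ as the $H^\al$-closure of ${}_0C^1[0,T]$), so \eqref{eq1.3} together with $\wt\pa_t^\al=J_\al^{-1}$ on ${}_\al H$ gives $\pa_t^\al v=J_\al^{-1}(J_\al J_{\be-\al}u)=J_{\be-\al}u$. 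For the left-hand side: on the dense subspace $L^2\subset{}^{-\al}H$ both $(J^\be)'$ and $J_{\be-\al}(J^\al)'$ restrict to $J_\be$ (by Lemma~\ref{lem1.2}(iii) and the Riemann--Liouville semigroup identity $J_\be=J_{\be-\al}J_\al$), so by continuity of both as bounded operators from ${}^{-\al}H$ to $L^2$ this equality extends to all of ${}^{-\al}H$; evaluating at $\pa_t^\al u\in{}^{-\al}H$ and using $(J^\al)'\pa_t^\al u=u$ yields $(J^\be)'\pa_t^\al u=J_{\be-\al}u$.

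The principal technical obstacle is precisely this commutation: $(J^\be)'$ and $\pa_t^\al$ are defined on the different spaces ${}^{-\be}H(0,T)$ and $L^2(0,T)$, so they cannot be composed or commuted directly at the abstract level. The route through the classical semigroup property on $L^2$, enabled by Lemma~\ref{lem1.2}(iii), is what glues the dual operators back to the concrete Riemann--Liouville integrals where everything becomes transparent. The lift of the scalar identity to $L^2(\Om)$-valued distributions is then routine, since every operator involved acts only in time.
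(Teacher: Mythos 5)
Your proof is correct and follows essentially the same route as the paper: apply $(J^\be)'$ to \eqref{eq1.4} and commute it with $\pa_t^\al$ by lifting the Riemann--Liouville semigroup identity $J_\be=J_{\be-\al}J_\al$ to the dual operators. The only差 — the paper justifies this commutation through its Lemma~\ref{lem2.1} (composition of adjoints, via Kato), whereas you prove the needed identity $(J^\be)'=J_{\be-\al}(J^\al)'$ on ${}^{-\al}H(0,T)$ by density from $L^2(0,T)$ and then identify both sides concretely as $J_{\be-\al}u$ — is a legitimate, self-contained alternative justification of the same sub-step, and your direct use of Lemma~\ref{lem1.2}(ii) for the spatial regularity of $v$ is if anything cleaner than the paper's detour through $(J^\al)'u$.
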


\begin{proof}
We show 

\begin{lemma}\label{lem2.1}
Let $\al,\ga>0$ and $\al+\ga<1$. Then
\[
(J^{\al+\ga})'v=(J^\al)'(J^\ga)'v=(J^\ga)'(J^\al)'v\quad\mbox{for }v\in{}^{-\al-\ga}H(0,T).
\]
\end{lemma}

\begin{proof}
We can directly verify 
\[
J^{\al+\ga}=J^\al J^\ga=J^\ga J^\al\quad\mbox{in }L^2(0,T).
\]
Therefore, by e.g. Kato \cite[Problem 5.26 (p.168)]{Ka}, we conclude the lemma.\qed
\end{proof}

We complete the proof of Proposition~\ref{prop2.1}. Since $u\in L^2(0,T;L^2(\Om))$, by Lemma~\ref{lem1.2} and $\al\le\be$, we see
\[
(J^\be)'u=J_\be u\in{}_\be H(0,T;L^2(\Om))\subset{}_\al H(0,T;L^2(\Om)).
\]
Since $u\in{}^{-\al}H(0,T;H^2(\Om)\cap H_0^1(\Om))$, Lemma~\ref{lem1.2} yields $(J^\al)'u\in L^2(0,T;H^2(\Om)\cap H_0^1(\Om))$. Then
\begin{align*}
(J^\be)'u=(J^{\be-\al})'(J^\al)'u=J_{\be-\al}(J^\al)'u & \in{}_{\be-\al}H(0,T;H^2(\Om)\cap H_0^1(\Om))\\
& \subset L^2(0,T;H^2(\Om)\cap H_0^1(\Om))
\end{align*}
by Lemma~\ref{lem1.1}(i), Lemma~\ref{lem1.2}(iii) and $(J^\al)'u\in L^2(0,T;H^2(\Om)\cap H_0^1(\Om))$. Therefore, we see $v\in L^2(0,T;H^2(\Om)\cap H_0^1(\Om))$.

Next we operate $(J^\be)'$ on both sides of \eqref{eq1.4} to deduce
\[
(J^\be)'((J^\al)')^{-1}u+A(J^\be)'u=(J^\be)'\mu\,f.
\]
We will prove 
\[
(J^\be)'((J^\al)')^{-1}u=((J^\al)')^{-1}(J^\be)'u,
\]
that is,
\[
(J^\al)'(J^\be)'((J^\al)')^{-1}u=(J^\be)'u. 
\]
In fact, by Lemma~\ref{lem2.1}, we see $(J^\be)'=(J^{\be-\al})'(J^\al)'=(J^\al)'(J^{\be-\al})'=(J^\be)'$ and then
\begin{align*}
(J^\al)'(J^\be)'((J^\al)')^{-1}u& =(J^\al)'(J^{\be-\al})'(J^\al)'((J^\al)')^{-1}u=(J^\al)'(J^{\be-\al})'u\\
& =(J^\be)'u.
\end{align*}
Therefore, it follows that
\[
(J^\be)'((J^\al)')^{-1}u=((J^\al)')^{-1}(J^\be)'u=((J^\al)')^{-1}v=\pa_t^\al v.
\]
Hence, we conclude $\pa_t^\al v+A v=(J^\be)'\mu\,f$.\qed
\end{proof}

In terms of Proposition~\ref{prop2.1}, the unique existence of the solution $u$ to \eqref{eq1.4}--\eqref{eq1.5} can be clarified via $v:=(J^\be)'u$ in \eqref{eq2.1}.

%%%%%%%%%%%%%%%%%%%%%%%%%%%%%%%%%%%%%%%%

\section{Duhamel's Principle}\label{sec-Duhamel}

In this section, we establish Duhamel's principle which transforms a solution to an initial-boundary value problem without the inhomogeneous term to a solution to \eqref{eq2.2}. Such a principle is known and see e.g. \cite{LRY} and we can refer also to the survey \cite{LLY} and Umarov \cite{Um}. Here we reformulate the formula in the space $_\al H(0,T)$ in order to apply within our framework.

\begin{lemma}[Duhamel's principle in $_\al H(0,T)$]\label{lem3.1}
Let $g\in L^2(0,T)$ and $f\in L^2(\Om)$. Let $z$ satisfy
\begin{equation}\label{eq3.1}
\begin{cases}
\pa_t^\al (z-f)+Az=0,\\
z-f\in{}_\al H(0,T;L^2(\Om)),\quad z\in L^2(0,T;H^2(\Om)\cap H^1_0(\Om)).
\end{cases}
\end{equation}
Then
\begin{equation}\label{eq3.2}
w(\,\cdot\,,t)=\int^t_0g(s)z(\,\cdot\,,t-s)\,\rd s,\quad0<t<T
\end{equation}
satisfies
\begin{equation}\label{eq3.3}
\begin{cases}
(\pa_t^\al+A)w=J_{1-\al}g\,f\quad\mbox{in }L^2(0,T;L^2(\Om)),\\
w\in{}_\al H(0,T;L^2(\Om))\cap L^2(0,T;H^2(\Om)\cap H^1_0(\Om)).
\end{cases}
\end{equation}
\end{lemma}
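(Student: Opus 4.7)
The plan is to exploit the representation $z=f-J_\al(Az)$, which follows directly from the homogeneous equation in \eqref{eq3.1} upon applying $J_\al$, convolve it against $g$, and then invert $J_\al$ via $\pa_t^\al$ to recover the PDE in \eqref{eq3.3}. The observation driving this is that $J_\al$ is itself convolution with the kernel $k_\al(t)=t^{\al-1}/\Ga(\al)\in L^1(0,T)$, so it commutes with convolution against $g$, while the constant-in-time term $g*f$ collapses to $J_1 g\cdot f$.

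First, I verify the regularity claim in \eqref{eq3.3}. Since $g\in L^2(0,T)$ and $z\in L^2(0,T;H^2(\Om)\cap H^1_0(\Om))$, Young's inequality for Bochner convolutions on $(0,T)$ shows that $w=g*z\in L^2(0,T;H^2(\Om)\cap H^1_0(\Om))$ and, by linearity of $A$, also $Aw=g*(Az)\in L^2(0,T;L^2(\Om))$.

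Second, from $z-f\in{}_\al H(0,T;L^2(\Om))$ with $\pa_t^\al(z-f)=-Az\in L^2(0,T;L^2(\Om))$, applying $J_\al$ (which inverts $\pa_t^\al$ on ${}_\al H(0,T)$ by Lemma~\ref{lem1.1}(i) and the remark following Lemma~\ref{lem1.2}) yields $z=f-J_\al(Az)$. Convolving against $g$ and using associativity of convolution together with $g*f=J_1 g\cdot f$, I arrive at
\[
w=J_1 g\cdot f-J_\al(Aw).
\]
Both summands on the right lie in ${}_\al H(0,T;L^2(\Om))$---the first because $J_1 g\in H^1(0,T)$ with $(J_1 g)(0)=0$, the second because $Aw\in L^2(0,T;L^2(\Om))$ and Lemma~\ref{lem1.1}(i) applies---so $w\in{}_\al H(0,T;L^2(\Om))$, completing the regularity assertion.

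Third, I apply $\pa_t^\al=(J_\al)^{-1}$ to both sides of the above identity. Using the semigroup relation $J_\al J_{1-\al}=J_1$, which gives $\pa_t^\al J_1 g=J_{1-\al}g$, together with $\pa_t^\al J_\al(Aw)=Aw$, I conclude
\[
\pa_t^\al w+Aw=J_{1-\al}g\cdot f\quad\mbox{in }L^2(0,T;L^2(\Om)),
\]
which is exactly the equation in \eqref{eq3.3}. The only technical point requiring care is the commutation of scalar convolution with $g$ and the Bochner-valued operator $J_\al$; this follows from Fubini's theorem, since $k_\al,g\in L^1(0,T)$ and $Az\in L^2(0,T;L^2(\Om))$, so the iterated integrals involved in both $g*J_\al(Az)$ and $J_\al(g*Az)$ are absolutely convergent in $L^2(\Om)$ and agree pointwise almost everywhere in $t$.
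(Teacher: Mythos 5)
Your proof is correct, but it follows a genuinely different route from the paper's. The paper first establishes a commutation lemma at the level of derivatives, namely $\pa_t^\al(g*v)=g*\pa_t^\al v$ for $v\in{}_\al H(0,T)$, proved by the classical Caputo computation for $g\in C_0^1[0,T]$, $v\in{}_0C^1[0,T]$ and then a density argument in $H^\al(0,T)$; it then splits $w=g*(z-f)+f\,J_1g$ and differentiates each piece. You instead integrate the equation first, rewriting \eqref{eq3.1} in Volterra form $z=f-J_\al(Az)$, convolve with $g$, and only at the very end invert $J_\al$. The commutation you need is then $g*J_\al(Az)=J_\al(g*Az)$, which is mere associativity of convolution of $L^1$ kernels against an $L^2$ Bochner function, justified by Fubini---no approximating sequences and no pointwise Caputo formula required. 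This is more elementary, and it also delivers the ${}_\al H(0,T;L^2(\Om))$ regularity of $w$ essentially for free, since it is read off the identity $w=J_1g\,f-J_\al(Aw)$ via Lemma~\ref{lem1.1}(i), whereas the paper obtains it somewhat tersely. What the paper's route buys is the commutation formula \eqref{eq3.4} as a statement of independent interest. One small point to tighten: your justification that $J_1g\,f\in{}_\al H(0,T;L^2(\Om))$ ``because $J_1g\in H^1$ and $(J_1g)(0)=0$'' is slightly informal when $\al\ge1/2$ (where ${}_\al H(0,T)$ is not simply the set of $H^\al$ functions vanishing at $0$); the clean argument is $J_1g=J_\al(J_{1-\al}g)$ with $J_{1-\al}g\in L^2(0,T)$, so Lemma~\ref{lem1.1}(i) applies directly, and this is exactly the factorization you already use to compute $\pa_t^\al J_1g=J_{1-\al}g$.
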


\begin{proof}
{\it Step 1.} We prove

\begin{lemma}\label{lem3.2}
Let $g\in L^2(0,T)$ and $v\in{}_\al H(0,T)$. Then
\begin{equation}\label{eq3.4}
\pa_t^\al\left(\int^t_0g(s)v(t-s)\,\rd s\right)=\int^t_0g(s)\pa_t^\al v(t-s)\,\rd s\quad\mbox{in }L^2(0,T).
\end{equation}
\end{lemma}

\begin{proof}
First we assume $g\in C_0^1[0,T]:=\{h\in C^1[0,T]\mid h(0)=h(T)=0\}$ and $v\in{}_0C^1[0,T]$. Then by $v(0)=0$, we see
\[
\left(\int^t_0g(s)v(t-s)\,\rd s\right)'=\int^t_0g(s)v'(t-s)\,\rd s\in L^1(0,T)
\]
and thus
\begin{align*}
\pa_t^\al\left(\int^t_0g(s)v(t-s)\,\rd s\right) & =\rd_t^\al\left(\int^t_0g(s)v(t-s)\,\rd s\right)\\
& =\f1{\Ga(1-\al)}\int^t_0 (t-s)^{-\al}\left(\int^s_0g(r)v'(s-r)\,\rd r\right)\rd s\\
& =\int^t_0g(r)\left(\f1{\Ga(1-\al)}\int^t_r(t-s)^{-\al}v'(s-r)\,\rd s\right)\rd r\\
& =\int^t_0g(r)\,\rd_t^\al v(t-r)\,\rd r=\int^t_0g(r)\pa_t^\al v(t-r)\,\rd r,
\end{align*}
where we exchanged the orders of the integrals with respect to $s$ and $r$. Therefore, \eqref{eq3.4} holds for each $g\in C_0^1[0,T]$ and $v\in{}_0C^1[0,T]$.  

Next, let $g\in L^2(0,T)$ and $v\in{}_\al H(0,T)$. Since $_\al H(0,T)=\ov{_0C^1[0,T]}^{H^\al(0,T)}$ (e.g., \cite{KRY}) and $L^2(0,T)=\ov{C_0^1[0,T]}^{L^2(0,T)}$, we can choose sequences $\{g_n\}\subset C_0^1[0,T]$ and $\{v_n\}\subset{}_0C^1[0,T]$ such that $g_n\longrightarrow g$ in $L^2(0,T)$ and $v_n\longrightarrow v$ in $_\al H(0,T)$ as $n\to\infty$. Henceforth we write $(g*v)(t)=\int^t_0g(s)v(t-s)\,\rd s$ for $0<t<T$, and we regard $\pa_t^\al$ as $(J_\al)^{-1}$, that is, $\cD(\pa_t^\al)={}_\al H(0,T)$ (see also \eqref{eq1.3}). As is directly proved,
we have $g_n*v_n\in{}_0C^1[0,T]$ and so $g_n*v_n\in{}_\al H(0,T)=\cD(\pa_t^\al)$. Therefore, we see
\[
\pa_t^\al(g_n*v_n)(t)=(g_n*\pa_t^\al v_n)(t),\quad0<t<T.
\]
Since $v_n\longrightarrow v$ in $_\al H(0,T)$, it follows that $\pa_t^\al v_n\longrightarrow\pa_t^\al v$ in $L^2(0,T)$. Then Young's convolution inequality yields $g_n*\pa_t^\al v_n\longrightarrow g*\pa_t^\al v$ in $L^2(0,T)$. Therefore, $\pa_t^\al(g_n*v_n)$ converges in $L^2(0,T)$ and 
$\pa_t^\al(g*v)=g*\pa_t^\al v$ in $L^2(0,T)$. The proof of Lemma~\ref{lem3.2} is complete.\qed
\end{proof}

{\it Step 2.} First we have
\[
w(\,\cdot\,,t)=\int^t_0g(s)(z(\,\cdot\,,t-s)-f)\,\rd s+f\int^t_0g(s)\,\rd s,\quad0<t<T.
\]
Since $\int^t_0g(s)\,\rd s=(J_1g)(t)$ for $0<t<T$ and $(\pa_t^\al J_1g)(t)=(J_\al^{-1}J_1g)(t)=(J_{1-\al}g)(t)$, by Lemma~\ref{lem3.2} and $z-f\in{}_\al H(0,T;L^2(\Om))$ we see
\[
\pa_t^\al w(\,\cdot\,,t)=\int^t_0g(s)\pa_t^\al(z-f)(\,\cdot\,,t-s)\,\rd s+(J_{1-\al}g)(t)f,\quad0<t<T.
\]
Moreover, we have $(A w)(\,\cdot\,,t)=\int^t_0g(s)A z(\,\cdot\,,t-s)\,\rd s$. Hence, we arrive at
\begin{align*}
(\pa_t^\al+A)w(\,\cdot\,,t) & =\int^t_0g(s)(\pa_t^\al(z-f)+A z)(\,\cdot\,,t-s)\,\rd s+(J_{1-\al}g)(t)f\\
& =(J_{1-\al}g)(t)f,\quad0<t<T.
\end{align*}
The regularity of $w$ described in \eqref{eq3.3} can follow directly from 
\eqref{eq3.2}. Thus, by the uniqueness of solution to \eqref{eq3.3}, the proof of Lemma~\ref{lem3.1} is complete.\qed
\end{proof}

%%%%%%%%%%%%%%%%%%%%%%%%%%%%%%%%%%%%%%%%

\section{Completion of the Proof of Theorem~\ref{thm1}}\label{sec-proof}

We first show the following key lemma.

\begin{lemma}\label{lem4.1}
We assume
\[
\begin{cases}
(\pa_t^\al+A)v=g\,f\quad\mbox{in }L^2(0,T;L^2(\Om)),\\
v\in{}_\al H(0,T;L^2(\Om))\cap L^2(0,T;H^2(\Om)\cap H^1_0(\Om)), 
\end{cases}
\]
where $g\in L^2(0,T)$ and $f\in L^2(\Om)$. Let $\om\subset\Om$ and $\bm x_0\in\Om$ be the same as that in Problem~$\ref{isp}$.

{\rm(i)} Let $g\not\equiv0$ in $(0,T)$. If $v=0$ in $\om\times(0,T),$ then $f\equiv0$ in $\Om$.

{\rm(ii)} Let $f$ satisfy {\rm\eqref{eq1.8}--\eqref{eq1.9}}. If $v=0$ at $\{\bm x_0\}\times(0,T),$ then $g\equiv0$ in $(0,T)$.
\end{lemma}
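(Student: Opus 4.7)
The plan is to combine a Mittag-Leffler/eigenfunction representation of $v$ with Titchmarsh's convolution theorem and classical elliptic tools. Since $A$ with Dirichlet boundary condition has compact resolvent on $L^2(\Om)$, its (generalised) Dirichlet eigenfunctions $\{\vp_n\}_{n\in\BN}$ with eigenvalues $\{\la_n\}$ form a Riesz basis of $L^2(\Om)$; expanding $f=\sum_n f_n\vp_n$ and solving the resulting scalar fractional ODE $\pa_t^\al v_n+\la_n v_n=g(t)f_n$ in ${}_\al H(0,T)$ for each mode, one obtains the representation
\[
v(\bm x,t)=\int_0^t g(s)\,h(\bm x,t-s)\,\rd s,\quad h(\bm x,t):=\sum_{n=1}^\infty f_n\,t^{\al-1}E_{\al,\al}(-\la_n t^\al)\vp_n(\bm x),
\]
where $E_{\al,\al}$ is the Mittag-Leffler function. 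For each fixed $\bm x$, $t\mapsto h(\bm x,t)$ is real-analytic on $(0,\infty)$, and its Laplace transform reads
\[
\wt h(\bm x,p)=\sum_{n=1}^\infty\f{f_n\vp_n(\bm x)}{p^\al+\la_n}=\bigl((A+p^\al)^{-1}f\bigr)(\bm x),\quad p>0,
\]
i.e.\ the elliptic resolvent $(A-\eta)^{-1}f$ evaluated at $\eta=-p^\al$.

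For part (i), the hypothesis yields $(g*h(\bm x,\,\cdot\,))(t)=0$ on $(0,T)$ for each $\bm x\in\om$. Since $g\not\equiv0$, Titchmarsh's convolution theorem makes $h(\bm x,\,\cdot\,)$ vanish on some $(0,\de_{\bm x})$ with $\de_{\bm x}>0$, and real-analyticity propagates this to $h(\bm x,\,\cdot\,)\equiv0$ on $(0,\infty)$. Laplace transform then gives $((A+p^\al)^{-1}f)(\bm x)=0$ for every $\bm x\in\om$ and $p>0$. Since $\eta\mapsto(A-\eta)^{-1}f$ is an $L^2(\Om)$-valued meromorphic function on $\BC$ whose only singularities are the discrete spectrum of $A$, the identity theorem extends the vanishing from $\{-p^\al:p>0\}$ to the entire resolvent set. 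Inspecting the principal part at each pole $\la_k$ shows that $(A-\la_k)^j P_k f$ vanishes on $\om$ for every $j\ge0$, where $P_k$ is the associated spectral projector; starting from the top-order term $(A-\la_k)^{m_k-1}P_k f$ (a Dirichlet eigenfunction for $\la_k$) and iterating strong unique continuation for the second-order elliptic operator $A-\la_k$, one obtains $P_k f\equiv0$ in $\Om$ for every $k$, whence $f=0$.

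For part (ii), the same reduction at the single point $\bm x_0$ reads $(g*h(\bm x_0,\,\cdot\,))(t)=0$ on $(0,T)$. Arguing by contradiction, if $g\not\equiv0$ on $(0,T)$, then Titchmarsh and analyticity force $h(\bm x_0,\,\cdot\,)\equiv0$ on $(0,\infty)$, so $((A+p^\al)^{-1}f)(\bm x_0)=0$ for every $p>0$. However, under $b_j=0$, $c\ge0$ and a sign-definite nontrivial $f$ (say $f\ge0$, $f\not\equiv0$), the function $U(\,\cdot\,,p):=(A+p^\al)^{-1}f\in H^2(\Om)\cap H^1_0(\Om)$ solves an elliptic Dirichlet problem with nonnegative zeroth-order coefficient $c+p^\al$ and nonnegative, non-vanishing right-hand side. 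The weak maximum principle gives $U\ge0$ in $\Om$, and the strong maximum principle/Hopf lemma upgrades this to $U(\,\cdot\,,p)>0$ in $\Om$; in particular $U(\bm x_0,p)>0$, contradicting the previous vanishing. Hence $g\equiv0$ on $(0,T)$.

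I expect the main obstacle to lie in the spectral/residue machinery of (i): one must justify compactness of the resolvent of $A$, describe the possibly generalised eigenspaces in the non-self-adjoint setting, know that the resolvent set is connected, and rigorously invoke strong unique continuation for $A-\la_k$ at the $C^1/C^0$ coefficient regularity of \eqref{eq1.1}. Part (ii) is technically lighter; the only points needing care are the uniform applicability of the maximum principle for $A+p^\al$ across $p>0$ (automatic since $c+p^\al\ge c\ge0$) and the meaningfulness of the pointwise trace $v(\bm x_0,t)$ in dimensions $d\ge4$, controlled by \eqref{eq1.9}.
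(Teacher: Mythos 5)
Your overall architecture --- a Duhamel representation, Titchmarsh's convolution theorem, $t$-analyticity, and then an elliptic endgame --- matches the paper's, and your part (ii) endgame (showing $((A+p^\al)^{-1}f)(\bm x_0)=0$ for all $p>0$ and invoking the weak and strong maximum principles for $A+p^\al$) is a clean alternative to the paper's route, which instead extracts $(A^{-1}f)(\bm x_0)=0$ from the $t\to\infty$ asymptotics $E_{\al,1}(-\rho_nt^\al)\sim(\Ga(1-\al)\rho_nt^\al)^{-1}$ before applying the maximum principle to $A^{-1}f$. For part (i) the paper simply cites the weak unique continuation theorem of Jiang--Li--Liu--Yamamoto, whereas you unfold its proof via resolvent meromorphy, residues and elliptic unique continuation; that is legitimate, but your justification of the representation through a Riesz basis of generalised eigenfunctions is not. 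Part (i) allows $b_j\ne0$ in \eqref{eq1.1}, so $A$ is non-self-adjoint, and its root vectors need not form a Riesz (or even unconditional) basis of $L^2(\Om)$; the mode-by-mode solution of scalar fractional ODEs is therefore unjustified. This is repairable --- the identity $\wt h(\,\cdot\,,p)=(A+p^\al)^{-1}f$ follows by Laplace-transforming the homogeneous problem directly, with no basis needed --- and you partially flag the issue yourself, but as written it is a gap.

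The second gap, which you do not flag, sits in part (ii) and is precisely where the paper's reduction $w:=J_{1-\al}v$ (Lemma~\ref{lem3.1}) does real work. Your Duhamel kernel is $h(\,\cdot\,,t)=\sum_nf_n\,t^{\al-1}E_{\al,\al}(-\la_nt^\al)\vp_n$, and under \eqref{eq1.9} the available pointwise bound is only $\|h(\,\cdot\,,t)\|_{C(\ov\Om)}\le Ct^{-1}$, since $\la_nt^{\al-1}|E_{\al,\al}(-\la_nt^\al)|\le Ct^{-1}$ is what the estimate $|E_{\al,\al}(-s)|\le C/(1+s)$ yields uniformly in $n$. Hence $h(\bm x_0,\,\cdot\,)$ need not be locally integrable near $t=0$ for $f$ merely in $L^2(\Om)$ or $\cD(A^\te)$, and the classical ($L^1$) Titchmarsh theorem does not apply to $g*h(\bm x_0,\,\cdot\,)$ as stated. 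The paper avoids this by convolving $g$ with the homogeneous solution $z(\,\cdot\,,t)=\sum_nf_nE_{\al,1}(-\la_nt^\al)\vp_n$, for which $\|z(\,\cdot\,,t)\|_{C(\ov\Om)}\le Ct^{-\al}$ is integrable at $t=0$. You would need either to insert the same mollification by $J_{1-\al}$ before invoking Titchmarsh, or to pass to the distributional (Titchmarsh--Lions) version of the support theorem. With these two repairs your proof goes through; the remaining steps (uniform choice of the Titchmarsh splitting point, analytic continuation of the resolvent, unique continuation for $A-\la_k$ with $C^1$ principal coefficients, and the pointwise trace at $\bm x_0$ for $d\ge4$) are handled or handleable essentially as you indicate.
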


Let Lemma~\ref{lem4.1} be proved. Then we can complete the proof of Theorem~\ref{thm1} as follows. Let $u$ be the solution to \eqref{eq1.4}--\eqref{eq1.5} and satisfy
\begin{equation}\label{eq4.2}
u=0\quad\mbox{in }\om\times(0,T)\qquad\mbox{or}\qquad u=0\quad\mbox{at }\{\bm x_0\}\times(0,T).
\end{equation}
Setting $v:=(J^\be)'u$ and $g:=(J^\be)'\mu\in L^2(0,T)$, we have \eqref{eq2.1}--\eqref{eq2.2} and $v\in{}_\al H(0,T;L^2(\Om))$ according to Proposition \ref{prop2.1}. Moreover, \eqref{eq4.2} yields $v=0$ in $\om\times(0,T)$ or $v=0$ at $\{\bm x_0\}\times(0,T)$. Thus, since $(J^\be)'\mu=0$ in $L^2(0,T)$ implies $\mu=0$ in $^{-\be}H(0,T)$, the application of Lemma~\ref{lem4.1} completes the proof of Theorem~\ref{thm1}. Thus it suffices to prove Lemma~\ref{lem4.1}.

\begin{proof}[Proof of Lemma \ref{lem4.1}]
We set $w:=J_{1-\al}v$. We note that by the injectivity of $J_{1-\al}$, it follows that $v=0$ in $\om\times(0,T)$ and $v=0$ at $\{\bm x_0\}\times(0,T)$ are equivalent to $w=0$ in $\om\times(0,T)$ and $w=0$ at $\{\bm x_0\}\times(0,T)$, respectively. Hence, we can reduce the proof to the following:

{\it Let $w$ satisfy \eqref{eq3.3}.  Then

{\rm(i)} Let $g\not\equiv0$ in $(0,T)$. If $w=0$ in $\om\times(0,T),$ then $f\equiv0$ in $\Om$.

{\rm(ii)} Let $f\in L^2(\Om)$ satisfy {\rm\eqref{eq1.8}--\eqref{eq1.9}}. If $w=0$ at $\{\bm x_0\}\times(0,T),$ then $g\equiv0$ in $(0,T)$.}\medskip
%%%%%%%%%%

{\it Proof of (i).} The proof is similar to that of Jiang, Li, Liu and Yamamoto \cite[Theorem 2.6]{JLLY} and we describe the essence.

In terms of Lemma~\ref{lem3.1}, we have
\[
w(\,\cdot\,,t)=\int^t_0g(s)z(\,\cdot\,,t-s)\,\rd s=0\quad\mbox{in }\om,\ 0<t<T.
\]
The Titchmarsh convolution theorem (e.g., \cite{Ti}) yields that there exists $t_*\in[0,T]$ such that $g=0$ in $(0,T-t_*)$ and $z=0$ in $\om\times(0,t_*)$. Since $g\not\equiv0$, we see that $t_*>0$, indicating that $z(\,\cdot\,,t)=0$ in $\om$ holds for $t$ in some open interval in $\BR$. We apply a uniqueness result (e.g., \cite{JLLY} for non-symmetric $A$) to obtain $z\equiv0$ in $\Om\times(0,\infty)$.  Consequently \eqref{eq3.1} implies $\pa_t^\al(z-f)\equiv0$ and thus $z-f=J_\al\pa_t^\al(z-f)\equiv0$ in $\Om\times(0,T)$. By $z\equiv0$ in $\Om\times(0,T)$, we reach $f\equiv0$ in $\Om$. This completes the proof of Lemma~\ref{lem4.1}(i).\medskip
%%%%%%%%%%

{\it Proof of (ii).} Henceforth, we set $(f,g):=\int_\Om f(\bm x)g(\bm x)\,\rd\bm x$.

We define $A$ by \eqref{eq1.1} with $b_j=0$ ($j=1,\ldots,d$), $c\ge0$ in $\Om$ and $\cD(A)=H^2(\Om)\cap H^1_0(\Om)$. Then we number all of its eigenvalues with their multiplicities as
\[
0<\la_1\le\la_2\le\cdots\longrightarrow\infty.
\]
Let $\vp_n$ be an eigenfunction for $\la_n$: $A\vp_n=\la_n\vp_n$ such that 
$\{\vp_n\}_{n\in\BN}$ forms an orthonormal basis in $L^2(\Om)$.

Then, the fractional power $A^\ga$ is defined with $\ga>0$, and
\[
\cD(A^\ga)\subset H^{2\ga}(\Om),\quad\|f\|_{H^{2\ga}(\Om)}\le C\left(\sum_{n=1}^\infty\la_n^{2\ga}|(f,\vp_n)|^2\right)^{1/2}
\]
for $\ga\ge0$ (e.g., Fujiwara \cite{Fu}, Pazy \cite{Pa}). Moreover, we define the Mittag-Leffler functions $E_{\al,\be}(z)$ with $\al,\be>0$ by 
\[
E_{\al,\be}(z)=\sum_{k=0}^\infty\f{z^k}{\Ga(\al k+\be)},\quad z\in\BC,
\]
where the power series is uniformly and absolutely convergent in any compact set in $\BC$ (e.g., Gorenflo, Kilbas, Mainardi and Rogosin \cite{GKMR}, Podlubny \cite{Po}). Then we can represent 
\begin{equation}\label{eq4.6}
z(\,\cdot\,,t)=\sum_{n=1}^\infty E_{\al,1}(-\la_n t^\al)(f,\vp_n)\vp_n\quad\mbox{in }L^2(0,T;L^2(\Om)).
\end{equation}
Then by \eqref{eq1.9} we can prove that for any fixed $\de>0$ and $T_1>0$, 
\begin{equation}\label{eq4.7}
\mbox{the series \eqref{eq4.6} is absolutely convergent in }L^\infty(\de,T_1;C(\ov\Om))
\end{equation}
and
\begin{equation}\label{eq4.8}
z(\bm x_0,\,\cdot\,)\in L^1(0,T_1).
\end{equation}
%%%%%%%%%%

{\it Verification of \eqref{eq4.7} and \eqref{eq4.8}.} First let $d=1,2,3$. Since 
\[
|E_{\al,1}(-\la_n t^\al)|\le\f C{1+\la_n t^\al},\quad n\in\BN,\ t>0
\]
(e.g., \cite{Po}), using \eqref{eq4.6}, we have
\begin{align*}
\|A z(\,\cdot\,,t)\|^2_{L^2(\Om)} & =\sum_{n=1}^\infty\la_n^2|E_{\al,1}(-\la_n t^\al)|^2|(f,\vp_n)|^2\le C\sum_{n=1}^\infty\left|\f{\la_n}{1+\la_n t^\al}\right|^2|(f,\vp_n)|^2\\
& \le C\,t^{-2\al}\sum_{n=1}^\infty\left|\f{\la_n t^\al}{1+\la_n t^\al}\right|^2|(f,\vp_n)|^2\le C\,t^{-2\al}\sum_{n=1}^\infty|(f,\vp_n)|^2\\
& =C\,t^{-2\al}\|f\|^2_{L^2(\Om)},
\end{align*}
that is, $\|z(\,\cdot\,,t)\|_{C(\ov\Om)}\le C\,t^{-\al}\|f\|_{L^2(\Om)}$ because
\[
\|z(\,\cdot\,,t)\|_{C(\ov\Om)}\le C\|z(\,\cdot\,,t)\|_{H^2(\Om)}\le C'\|A z(\,\cdot\,,t)\|_{L^2(\Om)}
\]
which is seen by $d\le3$ and the Sobolev embedding. Therefore, \eqref{eq4.7} and \eqref{eq4.8} are seen for $d=1,2,3$.

Next let $d\ge4$. Then, since $\te>d/4-1$ by \eqref{eq1.9}, the Sobolev embedding yields $\cD(A^{1+\te})\subset H^{2+2\te}(\Om)\subset C(\ov\Om)$. Therefore,
\begin{align*}
|z(\bm x_0,t)| & \le C\|z(\,\cdot\,,t)\|_{C(\ov\Om)}\le C\|z(\,\cdot\,,t)\|_{H^{2+2\te}(\Om)}\le C\|A^{1+\te}z(\,\cdot\,,t)\|_{L^2(\Om)}\\
& =C\left\|\sum_{n=1}^\infty\la_n E_{\al,1}(-\la_n t^\al)(A^\te f,\vp_n)\vp_n\right\|_{L^2(\Om)}\\
& =C\left(\sum_{n=1}^\infty\la_n^2|E_{\al,1}(-\la_n t^\al)|^2|(A^\te f,\vp_n)|^2\right)^{1/2}\\
& \le C\,t^{-\al}\left(\sum_{n=1}^\infty\left|\f{\la_n t^\al}{1+\la_n t^\al}\right|^2|(A^\te f,\vp_n)|^2\right)^{1/2}\le C\,t^{-\al}\|A^\te f\|_{L^2(\Om)},
\end{align*}
so that the verification of \eqref{eq4.7} and \eqref{eq4.8} is complete.\medskip

Similarly to the proof of (i), if $g\not\equiv0$ in $(0,T)$, then $z(\bm x_0,\,\cdot\,)\equiv0$ in $(0,t_*)$ with some constant $t_*>0$. In terms of \eqref{eq4.7}, we apply the $t$-analyticity of $z(\bm x_0,t)$ (e.g., Sakamoto and Yamamoto \cite{SY}) to reach $z(\bm x_0,t)=0$ for all $t>0$. Therefore, we obtain
\[
\sum_{n=1}^\infty E_{\al,1}(-\la_n t^\al)(f,\vp_n)\vp_n(\bm x_0)=0,\quad t>\de,
\]
where the series is absolutely convergent in $L^\infty(\de,T_1)$ with arbitrary $T_1>0$.

Not counting the multiplicities, we rearrange all the eigenvalues of $A$ as
\[
0<\rho_1<\rho_2<\cdots\longrightarrow\infty
\]
and by $\{\vp_{n j}\}_{1\le j\le d_n}$ we denote an orthonormal basis of $\ker(\rho_n-A)$. In other words, $\{\rho_n\}_{n\in\BN}$ is the set of all distinct eigenvalues of $A$. We set $P_n f:=\sum_{j=1}^{d_n}(f,\vp_{n j})\vp_{n j}$. Hence we can write
\begin{equation}\label{eq4.10}
\sum_{n=1}^\infty E_{\al,1}(-\rho_n t^\al)(P_n f)(\bm x_0)=0,\quad t>\de.
\end{equation}
On the other hand, we know
\begin{equation}\label{eq4.11}
E_{\al,1}(-\rho_n t^\al)=\f1{\Ga(1-\al)\rho_n t^\al}+O\left(\f1{\rho_n^2t^{2\al}}\right)\quad\mbox{as }t\to\infty
\end{equation}
(e.g., \cite[Theorem 1.4 (pp.33--34)]{Po}).

Since the series in \eqref{eq4.10} converges in $L^\infty(\de,T_1)$, extracting a subsequence of partial sums for the limit, we see that the subsequence of the partial sums is convergent for almost all $t\in(\de,T_1)$. Hence, by \eqref{eq4.11} we obtain
\[
\f1{\Ga(1-\al)}\f1{t^\al}\sum_{n=1}^\infty\f{(P_n f)(\bm x_0)}{\rho_n}+\f1{t^{2\al}}\sum_{n=1}^\infty c_n(P_n f)(\bm x_0)=0
\]
for almost all $t\in(\de,\infty)$, where $|c_n|\le O(\rho_n^{-2})$. Multiplying by $t^\al$ and choosing a sequence $\{t_m\}$ tending to $\infty$, we reach
\[
\sum_{n=1}^\infty\f{(P_n f)(\bm x_0)}{\rho_n}=0.
\]
Since it is assumed in Theorem~\ref{thm1}(ii) that $c\ge 0$ in $\Om$, we see that $A^{-1}$ exists and is a bounded operator from $L^2(\Om)$ to itself and 
\[
A^{-1}f=\sum_{n=1}^\infty\f{P_n f}{\rho_n}\quad\mbox{in }L^2(\Om).
\]
Therefore, we conclude $(A^{-1}f)(\bm x_0)=0$.

Set $\psi:=A^{-1}f\in H^2(\Om)\cap H^1_0(\Om)$. Then $A\psi=f$ in $\Om$ and
\begin{equation}\label{eq4.12}
\psi(\bm x_0)=0.
\end{equation}
Since $c\ge0$ in \eqref{eq1.1} and $f\ge0$ or $f\le0$ in $\Om$, using \eqref{eq4.12} and applying the strong maximum principle for $A$ (e.g., Gilbarg and Trudinger \cite{GT}), we conclude $\psi\equiv0$ in $\Om$, that is, $f\equiv0$ in $\Om$. This contradicts the assumption $f\not\equiv0$ in $\Om$. Therefore, $t_*>0$ is impossible. Hence $t_*=0$ and so Titchmarsh convolution theorem yields $g\equiv0$ in $(0,T)$. This completes the proof of Lemma~\ref{lem4.1}(ii).\qed
\end{proof}

%%%%%%%%%%%%%%%%%%%%%%%%%%%%%%%%%%%%%%%%

\section{Concluding remarks}\label{sec-remark}

In this article, we consider an initial-boundary value problem for
\begin{equation}\label{eq5.1}
(\pa_t^\al+A)u=\mu(t)f(\bm x),
\end{equation}
where $\mu=\mu(t)$ is in a Sobolev space of negative order. The main machinery is to operate the extended Riemann-Liouville fractional integral operator $(J^\be)'$ (see Lemma~\ref{lem1.2} in Sect.~\ref{sec-premain}) to reduce \eqref{eq5.1} to
\begin{equation}\label{eq5.2}
(\pa_t^\al+A)v=((J^\be)'\mu)(t)f(\bm x),
\end{equation}
where $v:=(J^\be)'u$ and $(J^\be)'\mu\in L^2(0,T)$. Thus for the inverse source problems for \eqref{eq5.1}, we can assume that $\mu\in L^2(0,T)$ by replacing \eqref{eq5.1} by \eqref{eq5.2}.

In this article, we limit the range of $\be$ to $(0,1)$ for simplicity, but we can choose arbitrary $\be>0$. Therefore, for inverse source problems for \eqref{eq5.1}, it is even sufficient to assume that $\mu$ is smooth or $\mu\in L^\infty(0,T)$.

The same transformation for inverse source problems is valid for general time-fractional differential equations including fractional derivatives of variable orders $\al_k(\bm x)$:
\begin{equation}\label{eq5.3}
\sum_{k=1}^N p_k(\bm x)\pa_t^{\al_k(\bm x)}u+A u=\mu(t)f(\bm x)
\end{equation}
with suitable conditions on $p_k,\al_k$. In particular, also for \eqref{eq5.3}, we can similarly discuss the determination of $\mu\in{}^{-\be}H(0,T)$ with $\be>0$ by transforming $\mu$ to a smooth function.

%%%%%%%%%%%%%%%%%%%%%%%%%%%%%%%%%%%%%%%%

\appendix
\section{Proof of \eqref{eq1.10}}\label{sec-app}

Since $g:=(J^\be)'\mu\in L^2(0,T)$ by $\mu\in{}^{-\be}H(0,T)$, by Lemma~\ref{lem1.2}(ii), it suffices to prove the solution $v:=(J^\be)'u\in L^2(0,T;C(\ov\Om))$ to \eqref{eq2.2} if $f$ satisfies \eqref{eq1.9}. By \cite{SY} for example, we have
\[
v(\,\cdot\,,t)=\sum_{n=1}^\infty\left(\int^t_0s^{\al-1}E_{\al,\al}(-\la_n s^\al)g(t-s)\,\rd s\right)(f,\vp_n)\vp_n,\quad0<t<T.
\]
Moreover, we know
\begin{equation}\label{eq6.2}
\la_n t^{\al-1}E_{\al,\al}(-\la_n t^\al)=-\f\rd{\rd t}E_{\al,1}(-\la_n t^\al),\quad t>0
\end{equation}
and
\begin{equation}\label{eq6.3}
t^{\al-1}E_{\al,\al}(-\la_n t^\al)\ge0,\quad t>0.
\end{equation}
We can directly verify \eqref{eq6.2} by the termwise differentiation because $E_{\al,1}(-\la_n t^\al)$ is an entire function, while \eqref{eq6.3} follows from the complete monotonicity of $E_{\al,1}(-\la_n t^\al)$ (e.g., Gorenflo, Kilbas, Mainardi and Rogosin \cite{GKMR}).

Let $d=1,2,3$. Then
\[
\|A v(\,\cdot\,,t)\|_{L^2(\Om)}^2=\sum_{n=1}^\infty\left|\int^t_0\la_n s^{\al-1} E_{\al,\al}(-\la_n s^\al)g(t-s)\,\rd s\right|^2|(f,\vp_n)|^2.
\]
Hence, Young's convolution inequality implies 
\begin{align}
& \quad\,\left\| 
\int^t_0\la_n s^{\al-1}E_{\al,\al}(-\la_n s^\al)g(t-s)\,\rd s\right\|_{L^2(0,T)}\nonumber\\
& \le\left\|\la_n t^{\al-1}E_{\al,\al}(-\la_n t^\al)\right\|_{L^1(0,T)}\|g\|_{L^2(0,T)},\label{eq6.4}
\end{align}
and \eqref{eq6.2} and \eqref{eq6.3} yield
\begin{align}
\left\|\la_n t^{\al-1}E_{\al,\al}(-\la_n t^\al)\right\|_{L^1(0,T)} & =\int^T_0\la_n t^{\al-1}E_{\al,\al}(-\la_n t^\al)\,\rd t\nonumber\\
& =-\int^T_0\f\rd{\rd t}E_{\al,1}(-\la_n t^\al)\,\rd t\nonumber\\
& =1-E_{\al,1}(-\la_n T^\al)\le1.\label{eq6.5}
\end{align}
Therefore, by \eqref{eq6.4} we see
\begin{align*}
& \quad\,\int^T_0\|A v(\,\cdot\,,t)\|^2_{L^2(\Om)}\,\rd t\\
& =\sum_{n=1}^\infty\left\|\int^t_0\la_n s^{\al-1}E_{\al,\al}(-\la_n s^\al)g(t-s)\,\rd s\right\|^2_{L^2(0,T)}|(f,\vp_n)|^2\\
& \le\|g\|^2_{L^2(0,T)}\sum_{n=1}^\infty|(f,\vp_n)|^2\le\|g\|^2_{L^2(0,T)}\|f\|^2_{L^2(\Om)}.
\end{align*}
Therefore, the Sobolev embedding $H^2(\Om)\subset C(\ov\Om)$ by $d=1,2,3$, yields 
\[
\|v\|^2_{L^2(0,T;C(\ov\Om))}\le C\|g\|^2_{L^2(0,T)}\|f\|^2_{L^2(\Om)},
\]
which means \eqref{eq1.10} for $d=1,2,3$.

Let $d\ge4$. Then we assume $f\in\cD(A^\te)$ with $\te>d/4-1$. Since $\la_n^\te\vp_n=A^\te\vp_n$, applying \eqref{eq6.4} and \eqref{eq6.5}, we have
\[
\|A^{1+\te}v(\,\cdot\,,t)\|^2_{L^2(\Om)}=\sum_{n=1}^\infty\left|\int^t_0\la_n s^{\al-1}E_{\al,\al}(-\la_n s^\al)g(t-s)\,\rd s\right|^2|(A^\te f,\vp_n)|^2,
\]
and so
\[
\|v\|_{L^2(0,T;\cD(A^{1+\te}))}^2\le C\sum_{n=1}^\infty\|g\|^2_{L^2(0,T)}|(A^\te f,\vp_n)|^2\le C\|g\|^2_{L^2(0,T)}\|A^\te f\|^2_{L^2(\Om)}.
\]
By the Sobolev embedding $\cD(A^{1+\te})\subset H^{2+2\te}(\Om)\subset C(\ov\Om)$, we complete the proof of \eqref{eq1.10}.\bigskip

%%%%%%%%%%%%%%%%%%%%%%%%%%%%%%%%%%%%%%%%

{\bf Acknowledgement}\ \ 
Y.\! Liu is supported by Grant-in-Aid for Early Career Scientists 20K14355 and 22K13954, JSPS.
M.\! Yamamoto is supported by Grant-in-Aid for Scientific Research (A) 20H00117 and Grant-in-Aid for Challenging Research (Pioneering) 21K18142, JSPS.
%
% ---- Bibliography ----
%

\end{document}